\newtheorem{theorem}{Theorem}[section]
\newtheorem{lemma}[theorem]{Lemma}
\newtheorem{corollary}[theorem]{Corollary}
\theoremstyle{definition}
\theoremstyle{remark}
\newtheorem{remark}[theorem]{Remark}
\numberwithin{equation}{section}
\begin{document}

\title[A new version of the Gelfand-Hille theorem]{A new version of the Gelfand-Hille theorem}

\author{Junsheng Fang}
\address{Junsheng Fang, School of Mathematics, Hebei Normal University, 050016, Shijiazhuang, P. R. China}
\email{jfang@hebtu.edu.cn}

\author{Bingzhe Hou}
\address{Bingzhe Hou, School of Mathematics, Jilin University, 130012, Changchun, P. R. China}
\email{houbz@jlu.edu.cn}
	
\author{Chunlan Jiang}
\address{Chunlan Jiang, School of Mathematics, Hebei Normal University, 050016, Shijiazhuang, P. R. China}
\email{cljiang@hebtu.edu.cn}
\date{}
\subjclass[2010]{Primary 47B10, 47B15, 47B40; Secondary 47A10, 47D03.}
\keywords{Gelfand-Hille theorem, nilpotent, quasinilpotent operators.}
\thanks{}
\begin{abstract}
Let $\mathcal{X}$ be a complex Banach space and $A\in\mathcal{L}(\mathcal{X})$ with $\sigma(A)=\{1\}$. We prove that for a vector $x\in \mathcal{X}$, if $\|(A^{k}+A^{-k})x\|=O(k^N)$ as $k \rightarrow +\infty$ for some positive integer $N$, then $(A-\mathbf{I})^{N+1}x=0$ when $N$ is even and $(A-\mathbf{I})^{N+2}x=0$ when $N$ is odd. This could be seemed as a new version of the Gelfand-Hille theorem. As a corollary, we also obtain that for a quasinilpotent operator $Q\in\mathcal{L}(\mathcal{X})$ and a vector $x\in\mathcal{X}$, if $\|\cos(kQ)x\|=O(k^N)$ as $k \rightarrow +\infty$ for some positive integer $N$, then $Q^{N+1}x=0$ when $N$ is even and $Q^{N+2}x=0$ when $N$ is odd.
\end{abstract}
\maketitle

\section{Introduction}

Let $\mathcal{X}$ be a complex Banach space and $A\in\mathcal{L}(\mathcal{X})$ be a bounded linear operator whose spectrum $\sigma(A)$ is the singleton $\{1\}$. Let $r$ be a positive integer. The Gelfand-Hille theorem asserts that $(A-\mathbf{I})^r=0$ if and only if $\|A^n\|=O(|n|^{r-1})$ or $\|A^n\|=o(|n|^{r})$ as $|n|\rightarrow \infty$, where $\mathbf{I}$ means the identity operator.  This result was firstly given by Gelfand \cite{Gel41} in 1941 for $r=1$, and then Hille \cite{Hille44} proved the general case in 1944 (see also \cite{Stone48} of Stone). Furthermore, Shilov \cite{Shi50} in 1950 gave an example to show that the condition only for positive powers of $A$ is not sufficient for the Gelfand-Hille theorem. In 1952, Wermer \cite{Wer52} used this theorem to prove that an invertible operator $T$ on a Banach space $\mathcal{X}$ has a nontrivial invariant subspace if $\|T^n\|=O(|n|^{r})$ as $|n|\rightarrow \infty$ for some positive integer $r$.

So far, the Gelfand-Hille theorem has been generalized to some different versions and has been applied to the operator theory such as Ces\`{a}ro mean and invariant subspaces problem, we refer to the survey \cite{Zem1994} of Zem\'{a}nek and the references therein. One of the local versions of the Gelfand-Hille theorem given by Aupetit and Drissi \cite{Aup94} asserts that for $A\in\mathcal{L}(\mathcal{X})$ with $\sigma(A)=\{1\}$ and $x\in\mathcal{X}$, if $\|A^nx\|=O(|n|^{r})$ as $|n|\rightarrow \infty$, then $(A-\mathbf{I})^{r+1}x=0$. Another local version given by Atzmon \cite{Atz80} shows that if $\|A^nx\|=O(n^{r})$ as $n\rightarrow +\infty$ and $n\|(A-\mathbf{I})^{n}x\|^{\frac{1}{n}}=0$ as $n\rightarrow +\infty$, then $(A-\mathbf{I})^{r+1}x=0$. Obviously, local version of the Gelfand-Hille theorem implies the original version.

In the present paper, we will give a generalization of the local Gelfand-Hille theorem. Our main theorem asserts that for $A\in\mathcal{L}(\mathcal{X})$ with $\sigma(A)=\{1\}$ and $x\in\mathcal{X}$, if $\|(A^{k}+A^{-k})x\|=O(k^N)$ as $k \rightarrow +\infty$, then
\[
\begin{matrix}
(A-\mathbf{I})^{N+1}x=0 & \text{if} \ N \ \text{is even}, \\
(A-\mathbf{I})^{N+2}x=0 & \text{if} \ N \ \text{is odd}.
\end{matrix}
\]
As a corollary, we also obtain that for a quasinilpotent operator $Q\in\mathcal{L}(\mathcal{X})$ and a vector $x\in\mathcal{X}$, if $\|\cos(kQ)x\|=O(k^N)$ as $k \rightarrow +\infty$ for some positive integer $N$, then
\[
\begin{matrix}
Q^{N+1}x=0 & \text{if} \ N \ \text{is even}, \\
Q^{N+2}x=0 & \text{if} \ N \ \text{is odd}.
\end{matrix}
\]

\section{A lemma on finite dimensional spaces}

Let $A\in\mathcal{L}(\mathcal{X})$ with $\sigma(A)=\{1\}$ and $x\in\mathcal{X}$. As well-known, $A-\mathbf{I}$ is nilpotent if $\mathcal{X}$ is a finite dimensional space. In this section, we will give a lemma to describe the growth of $\|(A^k+A^{-k})x\|$. Following from the Jordan decomposition theorem, each linear operator acting on a finite dimensional space is similar to a direct sum of some Jordan blocks. A Jordan block is of the form
\[
J_d(\lambda)=\begin{bmatrix}
\lambda & 1 &  & & 0 \\
  & \lambda &1  & &  \\
  &  & \ddots & \ddots  & \\
 &  & &\lambda &1 \\
0 &  & & &\lambda
\end{bmatrix}_{d\times d},
\]
where $d\in\mathbb{N}$ and $\lambda\in\mathbb{C}$. So it sufficed to consider the Jordan blocks, if the space is of finite dimension.

Before introducing the lemma, let us review some notions and basic results.
A nonzero vector $x$ in a complex Banach space $\mathcal{X}$ is called a cyclic vector of a linear operator $T$, if $\textrm{span}\{T^{k}x;~k=0,1,\ldots\}=\mathcal{X}$, where $\textrm{span}\{T^{k}x;~k=0,1,\ldots\}$ is the norm closure of the linear span of elements in $\{T^{k}x\}_{k=0}^{\infty}$. Denote
\[
\mathfrak{M}_x\triangleq\textrm{span}\{T^{k}x; \ k=0,1,2, \ldots\}.
\]
Then, $\mathfrak{M}_x$ is an invariant subspace of $T$. If $\mathfrak{M}_x$ is of finite dimension, then $T|_{\mathfrak{M}_x}$ is similar to a Jordan block. Moreover, a vector $x$ is cyclic under the action of a Jordan block if and only if the last coordinate of $x$ is nontrivial.

Let $\{a_n\}_{n=0}^{\infty}$ and $\{b_n\}_{n=0}^{\infty}$ be two sequence of positive numbers. If there exists $M>0$ such that $a_n\leq Mb_n$ for all $n$, we call that $a_n$ is quasi-dominated by $b_n$ and denote $a_n\precsim b_n$. Furthermore, if $a_n$ and $b_n$ are mutually quasi-dominated, we call they are equivalent and denote $a_n\sim b_n$. In particular, the usual notation $a_n=O(n^p)$ means that $a_n$ is quasi-dominated by $n^p$.

\begin{lemma}\label{Jn}
Let $\mathbb{C}^{d+1}$ be the $(d+1)$-dimension complex linear normed space. Let $J_{d+1}(1)$ be the $(d+1)$-Jordan block acting on $\mathbb{C}^{d+1}$. Then,
for any $x\in\mathbb{C}^{d+1}$,
\begin{enumerate}
  \item $\|J_{d+1}(1)^kx\|\precsim k^{d}$.
  \item When $d$ is even, $\|(J_{d+1}(1)^k+J_{d+1}(1)^{-k})x\|\precsim k^{d}$.
  \item When $d$ is odd, $\|(J_{d+1}(1)^k+J_{d+1}(1)^{-k})x\|\precsim k^{d-1}$.
\end{enumerate}
Moreover, in each of $(1-3)$, the $'\sim'$ holds if and only if $x$ is a cyclic vector of $J_{d+1}(1)$.
\end{lemma}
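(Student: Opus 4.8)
The plan is to pass to the nilpotent part and expand binomially. Put $N=J_{d+1}(1)-\mathbf I$, so $N^{d+1}=0$ while $N^{d}\neq 0$, with $\ker N^{j}$ the span of the first $j$ standard basis vectors; recall that $x$ is a cyclic vector of $J_{d+1}(1)$ exactly when $N^{d}x\neq 0$, i.e. when the last coordinate of $x$ is nonzero. Since $N$ is nilpotent of index $d+1$, the binomial series terminate and, for every $k\geq 1$,
\[
J_{d+1}(1)^{k}=\sum_{j=0}^{d}\binom{k}{j}N^{j},\qquad
J_{d+1}(1)^{-k}=\sum_{j=0}^{d}\binom{-k}{j}N^{j},
\]
so that
\[
J_{d+1}(1)^{k}+J_{d+1}(1)^{-k}=\sum_{j=0}^{d}c_{j}(k)\,N^{j},\qquad
c_{j}(k)=\binom{k}{j}+\binom{-k}{j}=\frac{1}{j!}\bigl(p_{j}(k)+p_{j}(-k)\bigr),
\]
where $p_{j}(t)=t(t-1)\cdots(t-j+1)$; in particular each $c_{j}$ is an even polynomial in $k$.

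The argument then runs on a degree count for these coefficients. As $\binom{k}{j}$ has degree $j$ in $k$, part $(1)$ is immediate from $\|J_{d+1}(1)^{k}x\|\leq\sum_{j=0}^{d}\bigl|\binom{k}{j}\bigr|\,\|N^{j}x\|\precsim k^{d}$, and the leading term $\binom{k}{d}N^{d}x$ shows that the $'\sim'$ holds iff $N^{d}x\neq 0$, i.e. iff $x$ is cyclic — if $N^{d}x=0$ the surviving sum is $\precsim k^{d-1}$. For the $c_{j}$, the key point is that $p_{j}(k)+p_{j}(-k)$ retains only the even-degree part of $p_{j}$: if $j$ is even then $\deg c_{j}=j$ with leading coefficient $2/j!$, while if $j$ is odd the $k^{j}$-terms cancel, so $\deg c_{j}\leq j-1$; in fact $\deg c_{j}=j-1$ with leading coefficient $-(j-1)/(j-1)!$ for odd $j\geq 3$, and $c_{1}\equiv 0$. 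Part $(2)$ then follows: when $d$ is even the top-order term of $\sum_{j}c_{j}(k)N^{j}$ is $c_{d}(k)N^{d}$, of degree $d$, whereas every $j\leq d-1$ contributes degree $\leq d-2$; hence $\|(J^{k}+J^{-k})x\|\precsim k^{d}$, and the $'\sim'$ holds exactly when $N^{d}x\neq 0$, i.e. when $x$ is cyclic. When $d$ is odd one instead has $\deg c_{d}=\deg c_{d-1}=d-1$ (note $d-1$ is even) while every $j\leq d-2$ contributes degree $\leq d-3$, giving $\|(J^{k}+J^{-k})x\|\precsim k^{d-1}$.

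I expect the genuinely delicate point to be the equivalence in $(3)$, where two coefficients of the top order $k^{d-1}$ interfere. The $k^{d-1}$-coefficient of $(J^{k}+J^{-k})x$ is
\[
\gamma\,N^{d}x+\delta\,N^{d-1}x=\delta\,N^{d-1}\!\left(\mathbf I+\tfrac{\gamma}{\delta}N\right)x,\qquad
\gamma=-\frac{d-1}{(d-1)!},\quad \delta=\frac{2}{(d-1)!},
\]
and since $\mathbf I+\tfrac{\gamma}{\delta}N$ is invertible and commutes with $N$, this vector is nonzero precisely when $N^{d-1}x\neq 0$; moreover, when it vanishes the remaining terms are $\precsim k^{d-3}$, so no intermediate growth rate can occur. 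A cyclic vector satisfies $N^{d}x\neq 0$, hence $N^{d-1}x\neq 0$, which yields the ``if'' half of the equivalence in all of $(1)$--$(3)$; reconciling the surviving-leading-term condition with cyclicity for the ``only if'' half of $(3)$ — and disposing of the degenerate case $d=1$, where $J^{k}+J^{-k}=2\mathbf I$ for all $k$ — is the step I would treat most carefully.
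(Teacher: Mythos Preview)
Your approach is identical to the paper's: write $J_{d+1}(1)=\mathbf I+N$ with $N^{d+1}=0$, expand $(\mathbf I+N)^{\pm k}$ binomially, and read off the growth from the degrees in $k$ of the coefficients $\binom{k}{j}$ and $c_j(k)=\binom{k}{j}+\binom{-k}{j}$. The paper simply displays the resulting upper-triangular matrices and asserts the conclusion; your parity analysis of $c_j$ (degree $j$ for $j$ even, degree $j-1$ for $j$ odd, with the explicit leading coefficients) is a more explicit version of exactly the same computation, and your arguments for (1), (2), the upper bound in (3), and the ``if'' half of every equivalence are correct.

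Your caution about the ``only if'' half of (3) is not merely prudent --- it is the point where the stated lemma is actually \emph{false}. Your own calculation shows that for odd $d$ the $k^{d-1}$-coefficient of $(J^{k}+J^{-k})x$ equals $\delta\,N^{d-1}(\mathbf I+\tfrac{\gamma}{\delta}N)x$, which vanishes precisely when $N^{d-1}x=0$; this is strictly weaker than cyclicity ($N^{d}x\neq 0$). Concretely, take $d=3$ and $x=e_3\in\mathbb C^4$: then $N^{3}x=0$ so $x$ is not cyclic, yet
\[
(J_4(1)^{k}+J_4(1)^{-k})e_3 \;=\; 2e_3+k^{2}e_1,
\]
so $\|(J^{k}+J^{-k})e_3\|\sim k^{2}=k^{d-1}$. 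The degenerate case $d=1$ you flagged behaves the same way: $J_2(1)^{k}+J_2(1)^{-k}=2\mathbf I$, so every nonzero $x$ gives $\sim k^{0}$, cyclic or not. Thus the paper's closing sentence (``the $\sim$ holds if and only if the last coordinate of $x$ is nontrivial'') is incorrect for part (3); there is nothing you could have added to rescue it. The good news is that only the ``if'' direction is invoked downstream, since in the main theorem $x$ is cyclic in $\mathfrak M_x$ by construction.
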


\begin{proof}
Notice that $J_{d+1}(1)=\mathbf{I}+J_{d+1}(0)$ and $J_{d+1}(0)^{d+1}=0$. Then for any positive integer $k$ larger than $d$,
\begin{align*}
J_{d+1}(1)^k&=(\mathbf{I}+J_{d+1}(0))^k=\sum\limits_{j=0}^{k}C_k^jJ_{d+1}(0)^j=\sum\limits_{j=0}^{d}C_k^jJ_{d+1}(0)^j \\
&=\begin{bmatrix}
1 & C_k^1 &C_k^2  &\cdots & C_k^d \\
0  & 1 &C_k^1  &\cdots &C_k^{d-1}  \\
\vdots  &\vdots  & \ddots & \ddots  &\vdots \\
\vdots &\vdots  & &\ddots &C_k^1 \\
0 &0  &\cdots &\cdots &1
\end{bmatrix},
\end{align*}
where $C_k^j=\frac{k!}{j!(k-j)!}$.
Similarly, for $k\geq d$,
\begin{align*}
J_{d+1}(1)^{-k}&=(\mathbf{I}+J_{d+1}(0))^{-k}=\sum\limits_{j=0}^{d}(-1)^jC_{k+j-1}^jJ_{d+1}(0)^j \\
&=\begin{bmatrix}
1 & -C_k^1 &C_{k+1}^2  &\cdots & (-1)^dC_{k+d-1}^d \\
0  & 1 &-C_k^1  &\cdots &(-1)^{d-1}C_{k+d-2}^{d-1}  \\
\vdots  &\vdots  & \ddots & \ddots  &\vdots \\
\vdots &\vdots  & &\ddots &-C_k^1 \\
0 &0  &\cdots &\cdots &1
\end{bmatrix}
\end{align*}
Moreover, for $k\geq d$,
\begin{align*}
J_{d+1}(1)^{k}+J_{d+1}(1)^{-k}&=\sum\limits_{j=0}^{d}\left(C_k^j+(-1)^jC_{k+j-1}^jJ_{d+1}(0)^j\right) \\
&=\begin{bmatrix}
2 & 0 &C_{k}^2+C_{k+1}^2  &\cdots & C_k^d+(-1)^dC_{k+d-1}^d \\
0  & 2 &0  &\cdots &C_k^{d-1}+(-1)^{d-1}C_{k+d-2}^{d-1}  \\
\vdots  &\vdots  & \ddots & \ddots  &\vdots \\
\vdots &\vdots  & &\ddots &0 \\
0 &0  &\cdots &\cdots &2
\end{bmatrix}
\end{align*}
Then, for any $x\in\mathbb{C}^{d+1}$,
\[
\|J_{d+1}(1)^kx\|\precsim C_k^d\sim k^{d},
\]
and
\[
\|(J_{d+1}(1)^k+J_{d+1}(1)^{-k})x\|\precsim 2+C_k^d+(-1)^dC_{k+d-1}^d \sim
\left\{\begin{matrix}
k^{d} & \text{if} \ N \ \text{is even}, \\
k^{d-1} & \text{if} \ N \ \text{is odd}.
\end{matrix}\right.
\]
It is not difficult to see that the $'\sim'$ holds if and only if the last coordinate of $x$ is nontrivial, which is equivalent to $x$ being cyclic.
\end{proof}

\section{Main results}

Our main theorem is as follows.

\begin{theorem}\label{newGH}
Let $\mathcal{X}$ be a complex Banach space, $x\in\mathcal{X}$ and $A\in\mathcal{L}(\mathcal{X})$ with $\sigma(A)=\{1\}$. If $\|(A^{k}+A^{-k})x\|=O(k^N)$ as $k \rightarrow +\infty$ for some positive integer $N$, then
\[
\begin{matrix}
(A-\mathbf{I})^{N+1}x=0 & \text{if} \ N \ \text{is even}, \\
(A-\mathbf{I})^{N+2}x=0 & \text{if} \ N \ \text{is odd}.
\end{matrix}
\]
\end{theorem}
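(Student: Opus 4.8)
The plan is to reduce the Banach space statement to the finite-dimensional computation in Lemma \ref{Jn} by passing to a suitable quotient. First I would reduce to the case where $x$ is a cyclic vector: replace $\mathcal{X}$ by the invariant subspace $\mathfrak{M}_x = \overline{\operatorname{span}}\{A^k x : k \ge 0\}$ and replace $A$ by $A|_{\mathfrak{M}_x}$; since $\sigma(A|_{\mathfrak{M}_x}) \subseteq \sigma(A) = \{1\}$ (using that $A$ is invertible and $\mathfrak{M}_x$ is also invariant under $A^{-1}$, one can take $\mathfrak{M}_x$ to be the smallest closed subspace containing $x$ and invariant under both $A$ and $A^{-1}$), the spectrum is still $\{1\}$, and the growth hypothesis is unchanged. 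So without loss of generality $x$ is cyclic for $A$. The conclusion $(A-\mathbf{I})^{N+1}x = 0$ (resp. $(A-\mathbf{I})^{N+2}x=0$) then becomes the claim that $A - \mathbf{I}$ is nilpotent of the appropriate order, i.e. that $\mathcal{X}$ is finite-dimensional with $\dim \mathcal{X} \le N+1$ (resp. $\le N+2$).

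Next I would use the contrapositive together with finite-dimensional quotients. Suppose $(A-\mathbf{I})^{m}x \ne 0$ for $m = N+1$ (the even case) or $m = N+2$ (the odd case). Set $Q = A - \mathbf{I}$, which is quasinilpotent. The vectors $x, Qx, \dots, Q^m x$ are then linearly independent — this needs an argument, but follows from the standard fact that for a quasinilpotent $Q$ the only way $\{Q^jx\}_{j=0}^m$ can be dependent is if some $Q^j x = 0$, forcing all later ones to vanish. Hence the finite-dimensional space $\mathcal{Y} = \operatorname{span}\{x, Qx, \dots, Q^{m}x\}$, which is $Q$-invariant modulo higher powers but not $A$-invariant in general, should instead be handled by a quotient construction: let $\mathcal{N}$ be a closed $A$-invariant subspace with $\mathcal{X}/\mathcal{N}$ finite-dimensional of dimension exactly $m+1$ and with the image $\bar x$ cyclic for the induced operator $\bar A$ on $\mathcal{X}/\mathcal{N}$; such $\mathcal{N}$ exists because in the $(m+1)$-dimensional space $\mathcal{X}/\overline{\operatorname{span}}\{Q^{m+1}x, Q^{m+2}x,\dots\}$ the relevant quotient works, using that $Q^mx\notin\overline{\operatorname{span}}\{Q^{m+1}x,\ldots\}$. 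Then $\bar A$ on $\mathcal{X}/\mathcal{N}$ has spectrum $\{1\}$, is a single Jordan block $J_{m+1}(1)$ with respect to the cyclic vector $\bar x$, and the quotient map is a contraction so $\|(\bar A^k + \bar A^{-k})\bar x\| \le \|(A^k + A^{-k})x\| = O(k^N)$.

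Now Lemma \ref{Jn} applies to $J_{d+1}(1)$ with $d = m = \dim(\mathcal{X}/\mathcal{N}) - 1$, and since $\bar x$ is cyclic the equivalence $'\sim'$ holds: $\|(\bar A^k + \bar A^{-k})\bar x\| \sim k^{d}$ if $d$ is even, $\sim k^{d-1}$ if $d$ is odd. In the even conclusion case $m = N+1$; if $N$ is even then $m = N+1$ is odd, giving growth $\sim k^{m-1} = k^N$ — which is consistent with the hypothesis, so this does not yet give a contradiction, and I must instead take $m = N+2$ here as well, or rather push to $m=N+2$: with $m = N+2$ (so $d=N+2$, even when $N$ is even) the lemma gives $\|(\bar A^k+\bar A^{-k})\bar x\|\sim k^{N+2}$, contradicting $O(k^N)$; hence $(A-\mathbf{I})^{N+2}x=0$, and a separate sharper argument (looking one dimension further, or analyzing the leading coefficient in the even-$d$ expansion of Lemma \ref{Jn}) upgrades this to $(A-\mathbf{I})^{N+1}x=0$ when $N$ is even. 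When $N$ is odd, taking $m = N+2$ gives $d = N+2$ odd, so growth $\sim k^{d-1} = k^{N+1}$, contradicting $O(k^N)$; thus $(A-\mathbf{I})^{N+2}x = 0$, matching the claim. The main obstacle I anticipate is the construction of the finite-dimensional quotient with a cyclic image and controlled spectrum — in particular verifying $Q^m x \notin \overline{\operatorname{span}}\{Q^{m+1}x, Q^{m+2}x, \dots\}$, which is where quasinilpotence of $Q$ must be used carefully — and the bookkeeping in the even case to squeeze the bound from $N+2$ down to $N+1$.
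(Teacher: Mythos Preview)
Your quotient approach has a genuine gap at exactly the point you flag as the ``main obstacle''. The claim that $Q^m x \notin \overline{\operatorname{span}}\{Q^{m+1}x, Q^{m+2}x,\dots\}$ does \emph{not} follow from quasinilpotence of $Q = A - \mathbf{I}$ alone. Take $Q$ to be the Volterra operator $(Qf)(t)=\int_0^t f(s)\,ds$ on $L^2[0,1]$, which is quasinilpotent, and $x \equiv 1$. Then $Q^j x = t^j/j!$, and $\overline{\operatorname{span}}\{Q^j x : j \ge 1\}$ is the $L^2$-closure of the polynomials vanishing at $0$, which is all of $L^2[0,1]$ (approximate the constant $1$ by $1-(1-t)^n$). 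Thus the closed $A$-invariant subspace $\mathcal{N} = \overline{\operatorname{span}}\{Q^{m+1}x,\dots\}$ can coincide with the whole cyclic subspace $\mathfrak{M}_x$, and your quotient $\mathfrak{M}_x/\mathcal{N}$ collapses to zero. Quasinilpotence only guarantees the \emph{algebraic} linear independence of $x, Qx, \dots, Q^m x$ (your polynomial argument for that part is fine); it says nothing about the closure of the tail. To build a nontrivial finite-dimensional quotient you would already need to invoke the growth hypothesis $\|(A^k + A^{-k})x\| = O(k^N)$, and that is precisely the substantive step of the theorem---your proposal has no mechanism for it. The hand-waved ``separate sharper argument'' in the even case is a symptom of the same missing ingredient.

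The paper's proof takes a different route at this point. Rather than a quotient, it shows directly that the subspace $\mathfrak{N}=\overline{\operatorname{span}}\{(A^k+A^{-k})x:k\ge 0\}$ has dimension at most $N+2$, via a resolvent/Liouville argument: for any functional $f^*$ annihilating $(A-\mathbf{I})^k x$ and $(A^{-1}-\mathbf{I})^k x$ for $0\le k\le N+1$, one forms $F(\lambda)=f^*\bigl(R(\lambda,A)x+R(\lambda,A^{-1})x\bigr)$, passes to an $(N+2)$-fold primitive $G$, and uses the hypothesis $O(k^N)$ to put the Taylor and Laurent coefficients of $G$ on $\{|\lambda|<1\}$ and $\{|\lambda|>1\}$ into $\ell^1$; hence $G$ is bounded on $\mathbb{C}\setminus\{1\}$, extends across $1$, and vanishes at infinity, so $G\equiv 0$ and $f^*$ kills every $(A^k+A^{-k})x$. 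Once $\mathfrak{N}$ is finite-dimensional, the identity $T-\mathbf{I}=\tfrac{1}{2}A^{-1}(A-\mathbf{I})^2$ with $T=\tfrac{1}{2}(A+A^{-1})$ forces $(A-\mathbf{I})^{2\dim\mathfrak{N}}x=0$, so $\mathfrak{M}_x$ itself is finite-dimensional, and only then does Lemma~\ref{Jn} finish the argument as in your final step.
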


\begin{proof}
Let
\[
\mathfrak{M}_x\triangleq\textrm{span}\{A^{k}x; \ k=0,1,2, \ldots\}.
\]
Since
\[
A^{-1}=(\mathbf{I}-(\mathbf{I}-A))^{-1}=\sum\limits_{n=0}^{\infty}(\mathbf{I}-A)^n,
\]
one can see that
\[
\mathfrak{M}_x=\textrm{span}\{A^{k}x; \ k=0,\pm1, \pm2, \ldots\}
\]
and for any $n\in\mathbb{N}$
\[
\mathfrak{M}_x=\textrm{span}\{A^{k}x; \ k=n,n+1,n+2, \ldots\}.
\]
Moreover, we also have $\sigma(A|_{\mathfrak{M}_x})=\{1\}$ and then $\sigma(A^{-1}|_{\mathfrak{M}_x})=\{1\}$.

Let
\[
\mathfrak{N}\triangleq\textrm{span}\{(A^{k}+A^{-k})x; \ k=0,1,2, \ldots\}\subseteq\mathfrak{M}_x.
\]
Firstly, we will prove that $\mathfrak{N}$ is a finite dimensional space.
Denote by $R(\lambda, A)=(\lambda-A)^{-1}$ the resolvent function. Then the operator valued function $R(\lambda, A)$ is analytic on $\mathbb{C}\setminus\{1\}$ and has a Laurent expansion
\[
R(\lambda, A)=(\lambda-A)^{-1}=\sum\limits_{k=0}^{\infty}\frac{(A-\mathbf{I})^{k}}{(\lambda-1)^{k+1}}.
\]
Given any $f^*\in \mathfrak{M}_x^*$, where $\mathfrak{M}_x^*$ is the dual space of $\mathfrak{M}_x$, such that for any $k=0,1, \ldots, N+1$,
\[
f^*((A-\mathbf{I})^{k}x)=0 \ \ \ \text{and} \ \ \  f^*((A^{-1}-\mathbf{I})^{k}x)=0.
\]
Then $F(\lambda)=f^*(R(\lambda, A)x+R(\lambda, A^{-1})x)$ is analytic on $\mathbb{C}\setminus\{1\}$ and has a Laurent expansion
\begin{align*}
F(\lambda)&=f^*(R(\lambda, A)x+R(\lambda, A^{-1})x) \\
&=\sum\limits_{k=0}^{\infty}\frac{f^*((A-\mathbf{I})^{k}x+(A^{-1}-\mathbf{I})^{k}x)}{(\lambda-1)^{k+1}} \\
&=\sum\limits_{k=N+2}^{\infty}\frac{f^*((A-\mathbf{I})^{k}x+(A^{-1}-\mathbf{I})^{k}x)}{(\lambda-1)^{k+1}}.
\end{align*}
Let
\[
G(\lambda)=(-1)^{N+2}\sum\limits_{k=0}^{\infty}\frac{f^*((A-\mathbf{I})^{k+N+2}x+(A^{-1}-\mathbf{I})^{k+N+2}x)}{(k+1)(k+2)\cdots(k+N+2)(\lambda-1)^{k+1}}.
\]
Then $G(\lambda)$ is also an analytic function on $\mathbb{C}\setminus\{1\}$. Moreover, $F(\lambda)$ is the $(N+2)$-th derivation of $G(\lambda)$, i.e.,
\[
G^{(N+2)}(\lambda)=F(\lambda).
\]
On the open unit disk $|\lambda|<1$,
\[
F(\lambda)=f^*(R(\lambda, A)x+R(\lambda, A^{-1})x)=\sum\limits_{k=0}^{\infty}{f^*(A^{-(k+1)}x+A^{k+1}x)}\cdot{\lambda^{k}}.
\]
Then, on the open unit disk $|\lambda|<1$,
\[
G(\lambda)=P_{N+1}(\lambda)+\sum\limits_{k=0}^{\infty}\frac{f^*(A^{-(k+1)}x+A^{k+1}x)}{(k+1)(k+2)\cdots(k+N+2)}\cdot\lambda^{k+N+2},
\]
where $P_{N+1}(\lambda)$ is some certain $(N+1)$-order polynomial. Following from $\|(A^{k}+A^{-k})x\|=O(k^N)$, the sequence of the Taylor coefficients of $G(\lambda)$ belongs to $\ell^{1}$ and consequently $G(\lambda)$ is bounded on $\overline{\mathbb{D}}\setminus\{1\}$.

Similarly, on the annulus $|\lambda|>1$,
\begin{align*}
F(\lambda)&=f^*(R(\lambda, A)x+R(\lambda, A^{-1})x) \\
&=\sum\limits_{k=0}^{\infty}\frac{f^*(A^{k}x+A^{-k}x)}{\lambda^{k+1}} \\
&=\sum\limits_{k=N+2}^{\infty}\frac{f^*(A^{k}x+A^{-k}x)}{\lambda^{k+1}}.
\end{align*}
Then, on the annulus $|\lambda|>1$,
\begin{align*}
G(\lambda)&=\sum\limits_{k=N+2}^{\infty}\frac{f^*(A^{k}x+A^{-k}x)}{(k-N-1)(k-N)\cdots k\cdot\lambda^{k-N-1}} \\
&=\sum\limits_{k=0}^{\infty}\frac{f^*(A^{k+N+2}x+A^{-(k+N+2)}x)}{(k+1)(k+2)\cdots(k+N+2)\lambda^{k+1}}.
\end{align*}
Following from $\|(A^{k}+A^{-k})x\|=O(k^N)$, the sequence of the Laurent coefficients of $G(\lambda)$ on $|\lambda|>1$ belongs to $\ell^{1}$ and consequently $G(\lambda)$ is bounded on $\mathbb{C}\setminus\overline{\mathbb{D}}$. Thus, the function $G(\lambda)$ is an entire function. In addition to $\lim\limits_{\lambda\rightarrow\infty}G(\lambda)=0$, we have $G(\lambda)\equiv 0$. So far, we obtained that $f^*(A^{k}x+A^{-k}x)=0$ for any $0\leq k\leq N+1$ implies $f^*(A^{k}x+A^{-k}x)=0$ for all $k\in\mathbb{N}$. Therefore, $\mathfrak{N}$ is a subspace of finite (at most $N+2$) dimension.

Let $T=\frac{A+A^{-1}}{2}$. Notice that $\sigma(T)=\{1\}$ and
\[
\textrm{span}\{T^{k}x; \ k=0,1,2, \ldots\}=\textrm{span}\{(A^{k}+A^{-k})x; \ k=0,1,2, \ldots\}=\mathfrak{N}.
\]
Then the operator $T$ acting on the finite dimensional space $\mathfrak{N}$ is similar to the Jordan block $J_{\mathrm{dim}\mathfrak{N}}(1)$, and consequently
\[
(T-\mathbf{I})^{\mathrm{dim}\mathfrak{N}}x=0.
\]
Since $\frac{A^{-1}}{2}$ is invertible and
\[
T-\mathbf{I}=\frac{A+A^{-1}}{2}-\mathbf{I}=\frac{A^{-1}}{2}(A-\mathbf{I})^2,
\]
we have
\[
(A-\mathbf{I})^{2\mathrm{dim}\mathfrak{N}}x=0,
\]
which implies that $\mathfrak{M}_x$ is also a finite dimensional subspace.
Since $x$ is a cyclic vector of $A$ acting on $\mathfrak{M}_x$, the operator $A$ acting on the space $\mathfrak{M}_x$ is similar to the Jordan block $J_{\mathrm{dim}\mathfrak{M}_x}(1)$.

Therefore, by Lemma \ref{Jn}, we obtain that
\[
\begin{matrix}
(A-\mathbf{I})^{N+1}x=0 & \text{if} \ N \ \text{is even}, \\
(A-\mathbf{I})^{N+2}x=0 & \text{if} \ N \ \text{is odd}.
\end{matrix}
\]
\end{proof}

\begin{remark}\label{onetwo}
Obviously, The simultaneous establishment of  $\|A^{k}x\|=O(k^N)$ and $\|A^{-k}x\|=O(k^N)$ implies the establishment of  $\|(A^{k}+A^{-k})x\|=O(k^N)$. But the converse is not natural. The above theorem shows that the converse is also true. By the way, the manner in the above proof is also valid to prove the local version of Gelfand-Hille theorem given by Aupetit and Drissi.
\end{remark}

\begin{corollary}\label{cosnV}
Let $\mathcal{X}$ be a complex Banach space and let $Q\in\mathcal{L}(\mathcal{X})$ be a quasinilpotent operator. For a vector $x\in X$, if $\|\cos(kQ)x\|=O(k^N)$ as $k \rightarrow +\infty$ for some positive integer $N$, then
\[
\begin{matrix}
Q^{N+1}x=0 & \text{if} \ N \ \text{is even}, \\
Q^{N+2}x=0 & \text{if} \ N \ \text{is odd}.
\end{matrix}
\]
\end{corollary}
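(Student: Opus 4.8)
The plan is to reduce Corollary~\ref{cosnV} to Theorem~\ref{newGH} via the operator $A = e^{iQ}$. Since $Q$ is bounded, the series $A = \sum_{n\ge 0}(iQ)^{n}/n!$ converges in operator norm, $A$ is invertible with $A^{-1} = e^{-iQ}$, and $A^{\pm k} = e^{\pm ikQ}$ for every $k\in\mathbb{N}$ (the exponents commute). Because $Q$ is quasinilpotent, $\sigma(Q) = \{0\}$, so the spectral mapping theorem for the holomorphic functional calculus gives $\sigma(A) = e^{i\sigma(Q)} = \{1\}$. Moreover, the power-series identity $\cos z = \tfrac12(e^{iz}+e^{-iz})$ applied to the bounded operator $kQ$ yields $\cos(kQ) = \tfrac12(A^{k}+A^{-k})$; hence the hypothesis $\|\cos(kQ)x\| = O(k^{N})$ is exactly $\|(A^{k}+A^{-k})x\| = O(k^{N})$.

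I would then apply Theorem~\ref{newGH} to this $A$ and $x$, obtaining $(A-\mathbf{I})^{N+1}x = 0$ when $N$ is even and $(A-\mathbf{I})^{N+2}x = 0$ when $N$ is odd; it remains to transfer these to powers of $Q$. Write
\[
A-\mathbf{I} = e^{iQ}-\mathbf{I} = iQ\,B,\qquad B = \sum_{n=0}^{\infty}\frac{(iQ)^{n}}{(n+1)!},
\]
a norm-convergent series. Then $B$ commutes with $Q$ (being a power series in $Q$), and $B = \mathbf{I}+S$ with $S = \sum_{n\ge 1}(iQ)^{n}/(n+1)!$ equal to $Q$ times a bounded operator commuting with $Q$; since the spectral radius is submultiplicative on commuting operators, $S$ is quasinilpotent, so $\sigma(B) = \{1\}$ and $B$ is invertible. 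Using the commutativity, $(A-\mathbf{I})^{m} = i^{m}Q^{m}B^{m}$ for every $m$, so $(A-\mathbf{I})^{m}x = 0$ forces $B^{m}Q^{m}x = Q^{m}B^{m}x = 0$, and then $Q^{m}x = 0$ after applying $B^{-m}$. Taking $m = N+1$ when $N$ is even and $m = N+2$ when $N$ is odd finishes the proof.

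I do not expect a genuine difficulty here: every step is routine holomorphic functional calculus, and all the series converge because $Q$ is bounded while $A-\mathbf{I}$ and $S$ are quasinilpotent. The only points worth spelling out are the identity $\sigma(e^{iQ}) = \{1\}$ and the factorization $A-\mathbf{I} = iQB$ with $B$ invertible; both are immediate once one observes that $A-\mathbf{I}$, $Q$ and $B$ all lie in the commutative closed algebra generated by $Q$, where the functional calculus behaves as expected.
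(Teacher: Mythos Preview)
Your proof is correct and follows the same strategy as the paper: set $A=e^{iQ}$, note that $\sigma(A)=\{1\}$ and $2\cos(kQ)=A^{k}+A^{-k}$, apply Theorem~\ref{newGH}, and then pass from $(A-\mathbf{I})^{m}x=0$ to $Q^{m}x=0$. The only cosmetic difference is in this last step: the paper writes $Q=-i\log A$ as a convergent power series in $\mathbf{I}-A$ and factors $(\mathbf{I}-A)^{m}$ out of $Q^{m}$, whereas you factor $A-\mathbf{I}=iQB$ with $B$ invertible and cancel $B^{m}$; the two arguments are equivalent.
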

\begin{proof}
Let $A=\mathrm{e}^{\mathbf{i}Q}$. Then $\sigma(A)=\{1\}$ and
\[
A^{k}+A^{-k}=\mathrm{e}^{\mathbf{i}kQ}+\mathrm{e}^{-\mathbf{i}kQ}=\cos(kQ).
\]
By Theorem \ref{newGH}, one can see that
\[
\begin{matrix}
(A-\mathbf{I})^{N+1}x=0 & \text{if} \ N \ \text{is even}, \\
(A-\mathbf{I})^{N+2}x=0 & \text{if} \ N \ \text{is odd}.
\end{matrix}
\]
Notice that
\[
Q=-\mathbf{i}\ln A=-\mathbf{i}\ln (\mathbf{I}-(\mathbf{I}-A))=-\mathbf{i}\sum\limits_{n=1}^{\infty}\frac{(\mathbf{I}-A)^n}{n}.
\]
Then, if $N$ is even,
\[
Q^{N+1}x=(-\mathbf{i}\sum\limits_{n=1}^{\infty}\frac{(\mathbf{I}-A)^n}{n})^{N+1}x=(-\mathbf{i}\sum\limits_{n=1}^{\infty}\frac{(\mathbf{I}-A)^{n-1}}{n})^{N+1}(\mathbf{I}-A)^{N+1}x=0.
\]
Similarly, if $N$ is odd,
\[
Q^{N+2}x=(-\mathbf{i}\sum\limits_{n=1}^{\infty}\frac{(\mathbf{I}-A)^{n-1}}{n})^{N+2}(\mathbf{I}-A)^{N+2}x=0.
\]
\end{proof}
\section*{Acknowledgement}

\noindent The third author was supported by National Natural Science Foundation of China (Grant No. 11831006, 11920101001 and 11771117). 

\section*{Data availability}

\noindent Data sharing is not applicable to this article as no datasets were generated or analyzed during the current study.

\section*{Competing interests}

\noindent The author declares that there is no conflict of interest or competing interest.

\end{document}